\newtheorem{theorem}{Theorem}[section]
\newtheorem{proposition}[theorem]{Proposition}
\theoremstyle{definition}
\newcommand{\mc}[0]{\mathcal}
\newcommand{\dual}[0]{^*}
\newcommand{\wh}[0]{\widehat}
\newcommand{\wt}[0]{\widetilde}
\begin{document}

\title{A Basic Structure for Grids in Surfaces}
\author{Lowell Abrams\footnote{Department of Mathematics, The George Washington
University, Washington, DC 20052. \underline{Email:} labrams@gwu.edu}\: and Daniel Slilaty\footnote{Department of
Mathematics and Statistics, Wright State University, Dayton, OH 45435. \underline{Email:} daniel.slilaty@wright.edu.
Work partially supported by a grant from the Simons Foundation \#246380.}} \maketitle

\abstract{A graph $G$ embedded in a surface $S$ is called an $S$-\emph{grid} when every facial boundary walk has length
four, that is, the topological dual graph of $G$ in $S$ is 4-regular. Aside from the case where $S$ is the torus or
Klein bottle, an $S$-grid must have vertices of degrees other than four. Let the sequence of degrees other than four in
$G$ be called the \emph{curvature sequence} of $G$. We give a succinct characterization of $S$-grids with nonempty
curvature sequence $L$ in terms of graphs that have degree sequence $L$ and are immersed in a certain way in $S$;
furthermore, the immersion associated with the $S$-grid $G$ is unique and so our characterization of $S$-grids also
partitions the collection of all $S$-grids.}

\section{Introduction}

The reader is expected to be familiar with the basics of topological graph theory as in Gross and Tucker
\cite{GrossTucker:Book}; all terminology that we do not define is from there.

Given a closed surface $S$, an $S$-\emph{grid} is an embedding of a graph $G$ in $S$ such that every facial boundary
walk has length four, that is, the topological dual graph of $G$ is 4-regular. An $S$-grid might alternatively be
termed a \emph{quadrangulation} of $S$; however, we will use the term ``grid" in this paper. This is probably the
weakest sort of definition for a ``quadrangulation" or ``grid"; other studies often place additional constraints on the
embedding.

Other than the case in which $S$ is the torus or Klein bottle, any $S$-grid must have vertices of degrees other than
four. A very explicit characterization of $S$-grids in the torus and Klein bottle with every vertex of degree four
(along with the additional property that the four faces around each vertex along with their boundaries form a $2\times
2$ square grid) was initially given by Thomassen \cite{Thomassen:Tilings}; a slightly different formulation is given by
M\'arquez, de Mier, Noy, Revuelta \cite{DeMierNoy:GridGraphs}.

If ``most" of the vertices of an $S$-grid are of degree four, then $G$ has ``large" areas that are annular or appear as
the standard, geometrically-flat, infinite $\{4,4\}$-planar lattice.

\begin{center}
\includegraphics[height=80pt]{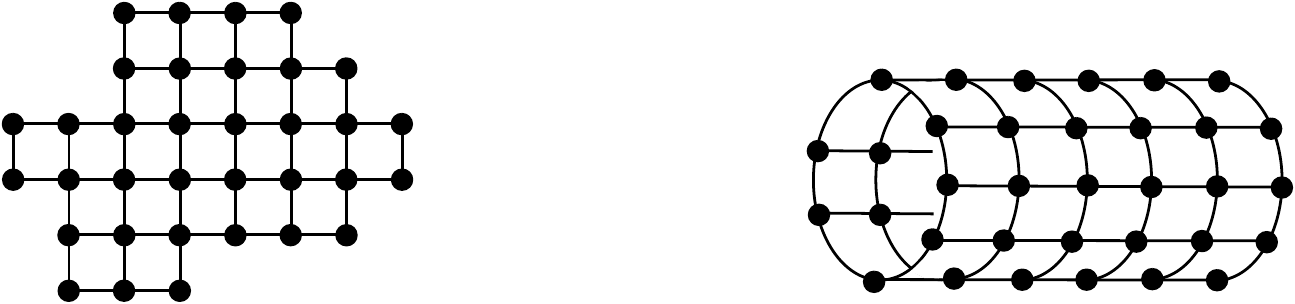}
\end{center}

\noindent In contrast to this, vertices that are not of degree four create the curvature necessary for an $S$-grid to
be finite when $S$ is not the torus or Klein bottle. As such, a vertex whose degree is not four is called a
\emph{curvature vertex}. Proposition \ref{P:PositiveNegativeCurvature} gives a relationship between the quantities and
degrees of curvature vertices in an $S$-grid.

\begin{proposition} \label{P:PositiveNegativeCurvature}
If $G$ is an $S$-grid with $v_i$ vertices of degree $i$ then, \[3v_1+2v_2+v_3=4\chi(S)+\sum_{i\geq5}(i-4)v_i.\]
Furthermore, if $\chi(S)\neq0$, then there are curvature vertices.\end{proposition}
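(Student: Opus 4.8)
The plan is to derive the identity directly from Euler's formula together with the defining property of an $S$-grid. Write $V$, $E$, and $F$ for the numbers of vertices, edges, and faces of the embedding, so that Euler's formula reads $V-E+F=\chi(S)$. The first step is to record two counting relations. Since the embedding is cellular and every facial boundary walk has length four, summing the lengths of all facial walks counts each edge exactly twice---each edge lies between two face-sides, even when a single edge occurs twice on one facial walk or is a loop---which gives $4F=2E$, hence $F=E/2$. Second, the handshake relation gives $2E=\sum_i i\,v_i$, while $V=\sum_i v_i$; here I assume, as is implicit for a grid, that $G$ has no isolated vertices, so the sums run over $i\geq 1$.

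The second step is purely algebraic. Substituting $F=E/2$ into Euler's formula yields $V-E/2=\chi(S)$, i.e.\ $2V-E=2\chi(S)$. Replacing $V$ and $E$ by the degree sums gives
\[
2\sum_i v_i-\tfrac12\sum_i i\,v_i=\tfrac12\sum_i(4-i)v_i=2\chi(S),
\]
so that $\sum_i(4-i)v_i=4\chi(S)$. Splitting this sum according to whether $i<4$, $i=4$, or $i>4$, and noting that the $i=4$ term vanishes, gives
\[
3v_1+2v_2+v_3-\sum_{i\geq 5}(i-4)v_i=4\chi(S),
\]
which rearranges to the stated formula.

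For the final assertion I would argue by contraposition. If $G$ has no curvature vertices then every vertex has degree four, so $v_i=0$ for all $i\neq 4$; the displayed identity then collapses to $0=4\chi(S)$, forcing $\chi(S)=0$. Equivalently, whenever $\chi(S)\neq 0$ the identity cannot hold with all terms vanishing, so $G$ must have a vertex of degree other than four.

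I expect the only genuine point requiring care to be the justification of $4F=2E$: one must verify that the edge-counting is valid for arbitrary cellular embeddings, including embeddings in non-orientable surfaces and facial walks that traverse an edge twice, rather than tacitly assuming each face is bounded by a simple four-cycle. Everything after that is bookkeeping with Euler's formula.
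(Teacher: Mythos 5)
Your argument is correct and follows essentially the same route as the paper: combining Euler's formula with $4F=2E$ and the handshake identity to obtain $\sum_i(4-i)v_i=4\chi(S)$, then splitting the sum. The extra care you take in justifying $4F=2E$ for arbitrary facial walks (loops, repeated edges) is a welcome refinement but does not change the substance of the proof.
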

\begin{proof}
If $G$ has $f$ faces and $e$ edges, then $\sum_i i v_i=2e$. Also, $4f=2e$ and $\big(\sum_i v_i\big)-e+f=\chi(S)$ which
when combined together yield $4(\sum_i v_i)=4\chi(S)+2e$. Now subtracting we obtain $\sum_i (4-i)v_i=4\chi(S)$ which
yields our desired results.
\end{proof}

Of course only certain combinations of quantities and degrees of curvature vertices are arithmetically possible. Given
an $S$-grid $G$ having some curvature vertices, the degree sequence of $G$ with the 4's removed is called the
\emph{curvature sequence} of $G$.

Given a graph $H$ and a surface $S$, a \emph{transverse immersion} of $H$ in $S$ is an immersion of $H$ in $S$ where
the only self intersections are transverse crossings of edge segments. Given a transverse immersion of $H$ in $S$, let
$\wh H$ be the graph embedding in $S$ obtained by placing a vertex at each transverse crossing of $H$ in $S$. We say
that the transverse immersion is \emph{quadrangular} when $\wh H$ is an $S$-grid.

In Section \ref{S:Construction} we will give a characterization of $S$-grids that also yields an equivalence relation
on the collection of all $S$-grids with curvature sequence $L$. The equivalence classes will be defined by quadrangular
transverse immersions in $S$ of graphs having degree sequence $L$. In Section \ref{S:ArithmeticConditions} we discuss a
simple arithmetic condition on graphs having transverse immersions in $S$. In Section \ref{S:Overlay} we motivate the
study of $S$-grids by providing an overview of two natural classes of $S$-grids arising from general embeddings of
graphs in surfaces.

\section{Construction} \label{S:Construction}

Consider a graph $G$ embedded in a surface $S$ and a vertex $v$ of degree 4 in $G$ with incident edges
$e_1,e_2,e_3,e_4$ in rotational order. Say that edges (or a single loop) $e_i$ and $e_{i+2}$ are \emph{transverse} with
respect to $v$. A \emph{transverse walk} in $G$ is a $uv$-walk in which neither $u$ nor $v$ have degree 4 (possibly
with $u=v$), each internal vertex in the walk has degree 4 in $G$, and pairs of successive edges along the walk are
transverse. Note that for any choice of vertex $u$ not of degree 4 and incident edge $e$ there is a unique transverse
walk starting at $u$ and containing $e$; furthermore, this walk is a trail, that is, no edge is ever used twice in the
walk. Also note that no two distinct transverse walks ever share an edge.

Now consider a closed walk $W=v_1,e_1,v_2,e_2,\ldots,v_n,e_n,v_1$ in $G$ in which each $v_i$ has degree 4 in $G$ and
pairs of successive edges (including the pair $e_n,e_1$) are transverse. Call such a walk a \emph{transverse circuit}.
Note that if $e$ is an edge in $G$ that is not contained in a transverse walk, then there is a unique transverse
circuit $W$ (up to choice of starting vertex and reversal) containing $e$ and $W$ is a trail. Furthermore, if $W_1\neq
W_2$ and $W_i$ is a transverse walk or transverse circuit, then $W_1$ and $W_2$ share no edge in common. These facts
yield Proposition \ref{P:TansverseDecomposition}.

\begin{proposition} \label{P:TansverseDecomposition}
If $G$ is graph embedded in a surface $S$, then the edges of $G$ partition in exactly one way into transverse walks and
transverse circuits.
\end{proposition}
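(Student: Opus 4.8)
The plan is to establish three things: every edge of $G$ lies in some transverse walk or transverse circuit; distinct transverse walks and transverse circuits are pairwise edge-disjoint; and the collection of all transverse walks and transverse circuits is canonically determined by the embedding. Together these force a unique partition of $E(G)$.

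For the covering and disjointness I would set up bookkeeping at the level of edge-ends (darts). Each edge contributes two darts, and ``traveling along an edge'' pairs these two darts, giving a perfect matching $M_1$ on the darts. At each degree-4 vertex the transverse relation pairs its four incident darts into two transverse pairs, and collecting these over all degree-4 vertices gives a second matching $M_2$, defined only on darts at degree-4 vertices. Since every dart meets at most one edge of $M_1$ and at most one edge of $M_2$, the multigraph $M_1\cup M_2$ has maximum degree two, so each of its connected components is a path or a cycle. A cycle component visits only degree-4 vertices and corresponds exactly to a transverse circuit; a path component terminates at two darts sitting at non-degree-4 vertices and corresponds exactly to a transverse walk. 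Because the components of $M_1\cup M_2$ partition the darts, and hence the edges, this simultaneously shows that every edge lies in a transverse walk or transverse circuit and that distinct such objects are edge-disjoint. Alternatively, existence and disjointness are precisely the facts recorded in the discussion preceding the statement, which I may simply cite.

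For uniqueness of the partition I would observe that the components of $M_1\cup M_2$ are intrinsic to $G$ and its embedding: a transverse walk is pinned down by its two non-degree-4 endpoints together with the forced transitions at interior degree-4 vertices, and a transverse circuit is pinned down up to choice of starting dart and orientation. Thus these are maximal transverse trajectories, and none can be a proper sub-walk of another or be subdivided into smaller legal pieces, since cutting at an interior degree-4 vertex would violate the straight-through condition while cutting elsewhere would introduce an illegal degree-4 endpoint. Hence the collection of transverse walks and transverse circuits is uniquely determined, and since each edge lies in exactly one of them the partition is forced.

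The step I expect to require the most care is the no-lollipop phenomenon inside the covering argument: one must rule out a trajectory that runs among degree-4 vertices and then crashes into itself without closing up at its starting edge. The maximum-degree-two property of $M_1\cup M_2$ handles this cleanly, since a vertex in a path-or-cycle decomposition can never have degree three. This is precisely why phrasing the argument through the two matchings, rather than through an unstructured ``keep going straight'' process, is the safest route.
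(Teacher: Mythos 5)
Your proposal is correct and takes essentially the same route as the paper, which gives no formal proof but simply records that every edge lies in a unique maximal transverse trajectory (a walk if it terminates at non-degree-4 vertices, a circuit otherwise) and that distinct such trajectories are edge-disjoint, declaring that these facts yield the proposition. Your dart-matching formulation of $M_1\cup M_2$ having maximum degree two is a rigorous packaging of exactly that forced-continuation argument, and it correctly delivers the path-versus-cycle dichotomy, rules out the lollipop configuration, and pins down uniqueness of the partition.
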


Now consider a given $S$-grid $G$ with nonempty curvature sequence $L$. Let $G_L$ be the graph whose vertices are the
curvature vertices of $G$ with an edge between $u$ and $v$ in $G_L$ if and only if there is a transverse $uv$-walk in
$G$. So now the transverse walks in $G$ provide a unique transverse immersion of $G_L$ in $S$ as described in
Proposition \ref{P:SkeletonGraph}.

\begin{proposition} \label{P:SkeletonGraph}
If $G$ is an $S$-grid with nonempty curvature sequence $L$, then there is a graph $G_L$ on the curvature vertices of
$G$ such that: $G_L$ has degree sequence $L$, there is a unique transverse immersion of $G_L$ in $S$ such that $G$
contains a subdivision of $\wh G_L$ as a subgraph, and the subdivided edges of $G_L$ are the transverse walks in $G$.
\end{proposition}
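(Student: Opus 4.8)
The plan is to construct the asserted immersion directly from the transverse-walk decomposition of $G$ furnished by Proposition~\ref{P:TansverseDecomposition}, and then to verify the degree sequence, the transversality of the immersion, the subdivision relationship, and uniqueness in turn. Throughout I read $G_L$ as a multigraph carrying one edge for each transverse walk of $G$, the two ends of that edge being the (possibly coincident) curvature vertices at which the walk begins and ends; this is the reading under which the degree-sequence claim can hold.

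First I would pin down the degree sequence. Fix a curvature vertex $v$. Since $v$ is not of degree $4$ it is never an \emph{internal} vertex of a transverse walk, so by the uniqueness statement recorded just before Proposition~\ref{P:TansverseDecomposition} every edge of $G$ incident with $v$ is the initial edge of exactly one transverse walk, contributing one end at $v$; a walk that both begins and ends at $v$ simply contributes two such ends and becomes a loop of $G_L$. Hence the number of edge-ends of $G_L$ at $v$ equals $\deg_G(v)$, giving $\deg_{G_L}(v)=\deg_G(v)$ and therefore degree sequence $L$.

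Next I would build the immersion by realizing each edge of $G_L$ as the curve in $S$ traced out by its transverse walk. Because transverse walks are trails and are pairwise edge-disjoint, two such curves (or a single curve with itself) can meet away from the curvature vertices only at a degree-$4$ vertex $w$ of $G$. There the four incident edges split into the two transverse pairs $\{e_1,e_3\}$ and $\{e_2,e_4\}$, and since $w$ is internal each pair lies on a single walk or circuit passing straight through; thus exactly two strands pass through $w$, and because the pairs alternate in the rotational order $e_1,e_2,e_3,e_4$ the strands cross there \emph{transversely}. This makes the construction a transverse immersion. Forming $\wh G_L$ then places a vertex exactly at those $w$ where both strands are transverse walks (self-crossings included), whereas a $w$ at which the second strand is a transverse circuit survives as an interior point of a single edge of $G_L$.

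Finally I would identify the union of all transverse walks as a subdivision of $\wh G_L$ lying in $G$: in this subgraph each curvature vertex retains degree $\deg_{G_L}(v)$ and each crossing vertex has degree $4$, while every other degree-$4$ vertex on a walk keeps only two of its four edges and so has degree $2$. Taking the curvature vertices and crossing vertices as branch vertices exhibits the subgraph as a subdivision of $\wh G_L$ with its inherited embedding and with subdivided edges exactly the transverse walks. Uniqueness is then immediate: the condition that the subdivided edges be the transverse walks forces each edge of $G_L$ onto a transverse walk matching its ends, and since the transverse walks are themselves uniquely determined by $G$, no other transverse immersion can meet the requirement. The step demanding the most care is the transversality check in the preceding paragraph---verifying that two strands sharing a degree-$4$ vertex genuinely \emph{cross} rather than merely touch---since it is precisely the alternation forced by the rotational definition of ``transverse'' (pairing $e_i$ with $e_{i+2}$) that both makes the immersion transverse and separates the crossing vertices of $\wh G_L$ from the subdivision points.
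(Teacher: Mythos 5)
Your proof is correct and follows essentially the same route as the paper, which states this proposition without a formal proof as an immediate consequence of the transverse-walk decomposition (Proposition~\ref{P:TansverseDecomposition}) and the uniqueness of the transverse walk through each edge incident to a curvature vertex. Your write-up simply makes explicit the details the paper leaves implicit—the degree count at curvature vertices, the transversality of the two strands at an internal degree-$4$ vertex, and the distinction between crossing vertices and subdividing vertices—all in the intended way.
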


We call the uniquely obtained embedded graph $\wh G_L$ of Proposition \ref{P:SkeletonGraph} and Theorem
\ref{P:TransverseImmersionIsQuadrangular} the \emph{skeleton grid} of the $S$-grid $G$.

\begin{theorem} \label{P:TransverseImmersionIsQuadrangular}
If $G$ is an $S$-grid with nonempty curvature sequence $L$, then the uniquely obtained embedded graph $\wh G_L$ of
Proposition \ref{P:SkeletonGraph} is an $S$-grid.
\end{theorem}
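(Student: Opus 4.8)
The plan is to realize $\wh G_L$ as the outcome of a sequence of purely local simplifications of $G$ and to track what each simplification does to the facial walks. First I would observe that $\wh G_L$ is exactly the embedded graph obtained from $G$ by deleting the edges of every transverse circuit and then suppressing all resulting degree-$2$ vertices (discarding any isolated vertices). Indeed, the vertices of $\wh G_L$ are the curvature vertices of $G$ together with the transverse crossings, and a degree-$4$ vertex $w$ of $G$ is a crossing precisely when both of its transverse strands lie on transverse walks; if instead one strand lies on a transverse circuit, then deleting that circuit leaves $w$ of degree $2$ (to be suppressed) or of degree $0$ (to be discarded). Since the endpoints of transverse walks are curvature vertices, and curvature vertices lie on no transverse circuit, deleting circuits disturbs none of the walk structure, so after removing all transverse circuits the only surviving degree-$4$ vertices are the crossings and the surviving embedded graph is exactly $\wh G_L$.

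This reduces the theorem to an induction on the number of transverse circuits. The base case is that of no transverse circuits, where every degree-$4$ vertex is a crossing and hence $\wh G_L = G$ is already an $S$-grid. For the inductive step it suffices to prove the following self-contained claim: if $H$ is any $S$-grid and $C$ is a transverse circuit of $H$, then the embedded graph $H'$ obtained from $H$ by deleting the edges of $C$ and suppressing degree-$2$ vertices is again an $S$-grid. Granting this, I would delete the transverse circuits of $G$ one at a time; by the claim each intermediate graph is an $S$-grid, the curvature vertices and transverse walks are never disturbed, and the final graph is precisely $\wh G_L$.

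To prove the claim I would argue locally along $C=w_1,f_1,w_2,\ldots,w_n,f_n,w_1$. Each $w_i$ has degree $4$, with its two circuit edges $f_{i-1},f_i$ forming one transverse strand and its two remaining edges $b_i,b_i'$ forming the other; in the rotation at $w_i$ the four edges alternate circuit, crossing, circuit, crossing, so the four faces of $H$ incident to $w_i$ fall into two pairs, one pair flanking $f_{i-1}$ and one flanking $f_i$. Let $P_i$ and $Q_i$ be the two faces flanking $f_i$. Each is a quadrilateral of $H$ with $f_i$ as a side, so once $f_i$ is deleted they bound a single hexagonal region; suppressing the shared endpoints $w_i$ and $w_{i+1}$ then straightens the two corners at which the crossing strands pass through, and the merged region becomes a quadrilateral. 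Faces of $H$ not incident to $C$ are unchanged. This is consistent with an Euler-characteristic check: deletion-and-suppression removes $n$ vertices, $2n$ edges and $n$ faces, and the relation $4F=2E$ is preserved.

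The main obstacle, and where I expect to spend most of the care, is the degenerate adjacencies possible in a non-simple embedding. The clean pairing $P_i\leftrightarrow Q_i$ presumes that the two sides of $f_i$ are distinct faces and that the merging and corner-straightening genuinely produce a disk with four sides. One must therefore also treat the cases in which a single face of $H$ meets $C$ along several edges, in which $C$ has length one or crosses itself (so that some $w_i$ becomes isolated rather than degree $2$), and in which $C$ is non-separating. In each of these cases I would verify directly from the rotation system that deletion-and-suppression still transforms the affected boundary walks into closed walks of length exactly four, so that $H'$ is an $S$-grid and the induction goes through.
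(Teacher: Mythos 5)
Your reduction and the generic local picture are fine, but the proof has a genuine gap, and it sits exactly where you have deferred the work. The key inductive claim --- ``if $H$ is any $S$-grid and $C$ is a transverse circuit, then deleting $C$ and suppressing degree-$2$ vertices yields an $S$-grid'' --- is false as stated: in the one-vertex torus grid (a bouquet of two loops with interleaved ends and a single quadrilateral face), each loop is a transverse circuit, and deleting one leaves a non-cellular embedding whose unique face is an annulus. So the claim cannot be proved ``locally along $C$''; some global input (ultimately, the presence of curvature vertices) must enter, and your proposal never identifies where. Concretely, your pairing $P_i\leftrightarrow Q_i$ silently assumes that the face-adjacency multigraph along $C$ (faces incident to $C$ as vertices, edges of $C$ as edges) is a perfect matching, hence a forest. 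If some face meets $C$ on two (necessarily opposite) sides, the deletions chain faces together, and if that chain ever closes into a cycle --- a band of quadrilaterals ringing around $C$ --- the merged region is an annulus or M\"obius band, not a disk, and the boundary walks do not become closed walks of length four at all; the verification you promise ``directly from the rotation system'' would fail rather than succeed. To complete the argument you would have to prove that no such closed chain can occur in any of the intermediate grids arising in your induction, and that is the real content of the theorem. Two smaller issues: suppressing \emph{all} resulting degree-$2$ vertices would wrongly suppress curvature vertices of degree $2$ (which are legitimate vertices of $\wh G_L$ and do occur, by Proposition \ref{P:PositiveNegativeCurvature}); and your Euler-characteristic check $4(F-n)=2(E-2n)$ already presupposes that every edge deletion lowers the face count by exactly one, which is precisely the point in question.

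For contrast, the paper avoids all of this by not peeling circuits one at a time: it takes the union $\wt G_L$ of all transverse walks, cuts $S$ open along it, and computes the Euler characteristic of each resulting bordered piece $G_R$ directly from the degree bookkeeping ($2e=2v_c+3v_s+4v_I$ and $4f=2e-l$), obtaining $\chi(G_R)=\frac14 v_c$ with $0<\frac14v_c\le 1$, hence $v_c=4$ and $G_R$ a disk. That single computation simultaneously establishes cellularity of the regions and the quadrilateral face condition, with no case analysis of degenerate adjacencies. If you want to salvage your approach, you need a lemma ruling out the annular chains above, and the most natural proof of that lemma is essentially the paper's Euler-characteristic argument.
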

\begin{proof}
Let $\wt G_L$ be the subgraph of $G$ that is a subdivision of $\wh G_L$. Let $\mc R$ be the collection of regions into
which $\wt G_L$ subdivides $S$. It is not \emph{a priori} true that $R$ is a collection of 2-cells; however, we will
see that this is indeed the case. Consider some $R\in\mc R$ and let $G_R'$ be the subgraph of $G$ that is embedded in
$R$ including the boundary which is a closed walk in $\wt G_L$. Let $G_R$ be the surface obtained from $G_R'$ by
cutting along the boundary walk of $R$ in $\wt G_L$ so that the resulting boundary is a cycle. Let $v_c$ be the number
of times a copy of a vertex of $\wh G_L$ (i.e., a branch vertex of $\wt G_L$) appears on the boundary walk of $G_R$;
$v_s$ be the number of times a copy of a subdividing vertex appears on the boundary walk of $G_R$; $v_I$ be the number
of interior vertices of $G_R$; $e$ be the number of edges of $G_R$; $f$ the number of faces of $G_R$ (excluding the
outer face); and $l$ be the length of the boundary cycle of $G_R$. So now $4f=2e-l$ and $2e=2v_c+3v_s+4v_I$. Now
calculating the Euler characteristic of $G_R$ we obtain\begin{eqnarray*}
                                   \chi(G_R) &=& v_c+v_s+v_I-e+f\\
                                    &=&  v_c+v_s+v_I-e+\textstyle\frac12e-\frac14l\\
                                    &=&  v_c+v_s+v_I-\textstyle\frac12e-\frac14(v_c+v_s)\\
                                    &=& \textstyle\frac34(v_c+v_s)+v_I-\frac12v_c-\frac34v_s-v_I\\
                                    &=& \textstyle\frac14v_c
                                 \end{eqnarray*} Of course, $\chi(G_R)$ is an integer and
$\chi(G_R)\leq 1$. Also $v_c>0$ because $G_R$ is defined by a region of $\wh G_L$. Thus $0<\frac14v_c=\chi(G_R)\leq1$
which implies that $v_c=4$ and that $G_R$ is a disk. Our result follows.
\end{proof}

So now, given an $S$-grid $G$ and its skeleton grid $\wh G_L$, again let $\wt G_L$ be the subdivision of $\wh G_L$ that
is a subgraph of $G$. Let $Q$ be a quadrilateral face of $\wh G_L$ and let $Q'$ be the corresponding face of $\wt G_L$.
We claim that the part of $G$ inside of $Q'$ is obtained as follows: subdivide opposite edges on the boundary of $Q$ an
equal number of times and then patch with a rectangular grid as shown in Figure \ref{F:Patching}.

\begin{figure}[H]
\begin{center}
\includegraphics[scale=1,page=1]{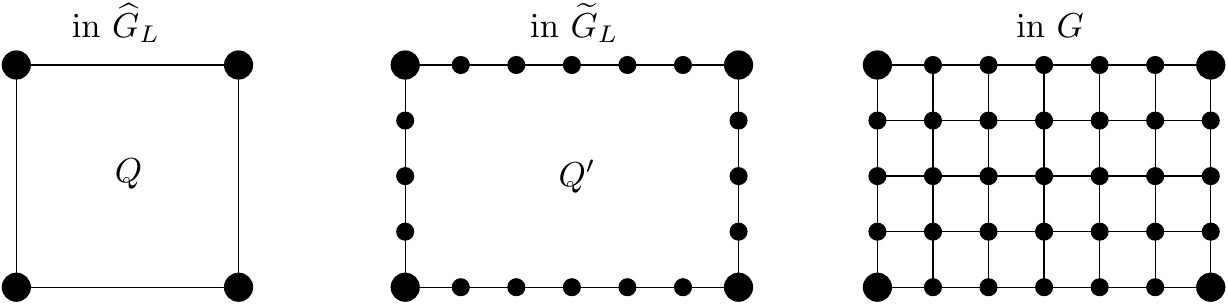}
\end{center}
\caption{Patching} \label{F:Patching}
\end{figure}

Showing that the part of $G$ inside of $Q$ is obtained in this fashion is easily done by the following inductive
argument. Let $e_1,e_2,e_3,e_4$ be the boundary walk of $Q$. Consider the edge $e_1$ and say that $e_1$ is subdivided
$t$ times in going from $Q$ to $Q'$. Each of these subdividing vertices on $e_1$ has exactly one incident edge in the
interior of $Q$. The only way in which quadrilateral faces may now be closed off is with a path of edges from $e_2$ to
$e_4$ (see Figure \ref{F:PatchingInduction}). Continuing by induction yields the desired structure.

\begin{figure}[H]
\begin{center}
\includegraphics[scale=0.8,page=10]{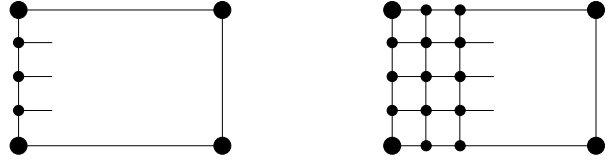}
\end{center}
\caption{Induction} \label{F:PatchingInduction}
\end{figure}

The only question remaining is what are the possible choices for the number of subdivisions for each edge. The
topological dual graph $(\wh G_L)\dual$ is 4-regular and so its edges partition into transverse circuits. As stated
before, the number of subdivisions for opposing sides of a quadrilateral face $Q$ must be the same. Hence each of the
edges that form a transverse circuit of $(\wh G_L)\dual$ must be subdivided the same number of times as the others.

We now have the following general construction method for any $S$-grid. Furthermore, this method partitions the class
of all $S$-grids into equivalence classes represented by their skeleton graphs. After choosing the graph $G_L$ in Step
1, it is not at all clear as to whether or not $G_L$ has a quadrangular transverse immersion in any closed surface $S$.
Thus most of the detail of $S$-grids is contained in Step 2 because Steps 3 and 4 can always be carried out
unambiguously after the completion of Step 2.

\begin{itemize}
\item[(1)] Take a graph $G_L$ without vertices of degree 4.
\item[(2)] Take a quadrangular transverse immersion of $G_L$ in a closed surface $S$ and its associated skeleton
    graph $\wh G_L$.
\item[(3)] Calculate the transverse circuits of $(\wh G_L)\dual$ and choose a non-negative integer $n_C$ for each
    transverse circuit $C$.
\item[(4)] Subdivide the edges $\wh G_L$ corresponding to $C$ $n_C$ times each and patch the resulting faces.
\end{itemize}

As an example of this construction consider the Wagner Graph $V_8$. Two distinct transverse immersions of $V_8$ in the
sphere are shown in Figure \ref{F:WagnerGraph}. The immersion on the right is quadrangular but the one on the left is
not. Let $\wh V_8$ be the skeleton graph obtained by the quadrangular transverse immersion.

\begin{figure}[H]
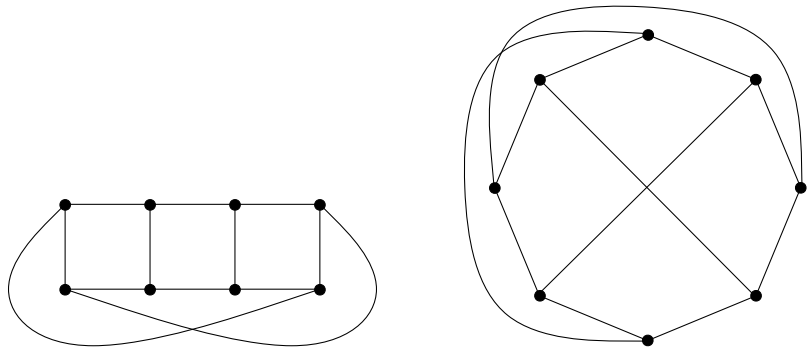

\begin{center}
\includegraphics[scale=.5,page=2]{GeneralStructureFigures.pdf}\hspace{1cm}
\includegraphics[scale=.5,page=3]{GeneralStructureFigures.pdf}
\end{center}
\caption{Two transverse immersions of the Wagner Graph in the sphere. The one on the right is quadrangular.} \label{F:WagnerGraph}
\end{figure}

The edges of the topological dual graph of the spherical grid $\wh V_8$ form a single transverse circuit. Thus each
edge of $\wh V_8$ must be subdivided the same number of times and then each face is patched. In Figure
\ref{F:WagnerGraph2}, each edge is subdivided twice.

\begin{figure}[H]
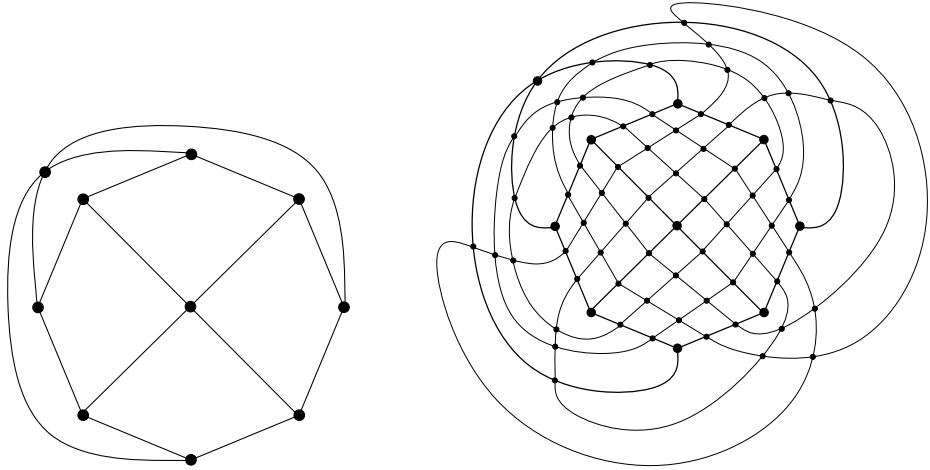

\begin{center}
\includegraphics[scale=.5,page=4]{GeneralStructureFigures.pdf}\hspace{1cm}
\includegraphics[scale=.4,page=5]{GeneralStructureFigures.pdf}
\end{center}
\caption{A skeleton grid coming from a transverse immersion of the Wagner Graph
and a spherical grid obtained by subdividing each edge twice and then patching.} \label{F:WagnerGraph2}
\end{figure}

\section{Arithmetic Conditions} \label{S:ArithmeticConditions}

Given a graph $G$ without degree-4 vertices and a surface $S$, Proposition \ref{Arithmetic1} provides an arithmetic
condition that is necessary for $G$ to have a quadrangular transverse immersion in a given closed surface $S$.

\begin{proposition} \label{Arithmetic1}
If $G$ has a quadrangular transverse immersion in $S$, then $\chi(S)=|V(G)|-\frac12|E(G)|$. In particular, a given
graph $G$ without vertices of degree 4 can have a quadrangular transverse immersion in surfaces of only one possible
Euler characteristic.
\end{proposition}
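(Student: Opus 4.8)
The plan is to compute the Euler characteristic of the embedded skeleton graph $\wh G$ directly from its vertex, edge, and face counts, and then relate these counts back to the data of the immersed graph $G$. Recall that $\wh G$ is the $S$-grid obtained by placing a vertex at each transverse crossing of the immersion of $G$. Since the immersion is quadrangular, $\wh G$ is a genuine embedding in $S$, so $\chi(S)=|V(\wh G)|-|E(\wh G)|+|F(\wh G)|$, and this is the identity I would exploit.

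First I would count the faces. Because $\wh G$ is an $S$-grid, every facial boundary walk has length four, so the standard double-counting $4|F(\wh G)|=2|E(\wh G)|$ gives $|F(\wh G)|=\tfrac12|E(\wh G)|$. Substituting this into the Euler relation yields
\[
\chi(S)=|V(\wh G)|-|E(\wh G)|+\textstyle\frac12|E(\wh G)|=|V(\wh G)|-\textstyle\frac12|E(\wh G)|.
\]
So it remains to reinterpret $|V(\wh G)|$ and $|E(\wh G)|$ in terms of $G$ itself.

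Next I would carefully distinguish the two kinds of vertices in $\wh G$. The vertices of $\wh G$ are of exactly two types: the original vertices of $G$ (none of which has degree 4, by hypothesis) and the newly created crossing vertices (each of which has degree 4). Similarly, each edge of $G$, upon crossing others, is cut into several edge-segments of $\wh G$. The key bookkeeping observation is that each transverse crossing is a point where two edge-segments meet, i.e., a degree-4 vertex; counting edge-endpoints at crossing vertices versus at original vertices gives a linear relation among $|V(\wh G)|$, $|E(\wh G)|$, and the degree data of $G$. Concretely, I expect that if $G$ has $|V(G)|$ vertices and the crossing vertices number $c$, then $|V(\wh G)|=|V(G)|+c$, while every crossing splits edges so that $|E(\wh G)|=|E(G)|+2c$ (each crossing adds two segments). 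Plugging these into $\chi(S)=|V(\wh G)|-\tfrac12|E(\wh G)|$ makes the $c$ terms cancel: $\chi(S)=(|V(G)|+c)-\tfrac12(|E(G)|+2c)=|V(G)|-\tfrac12|E(G)|$, which is exactly the claimed formula.

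The main obstacle is pinning down the edge-segment count precisely, i.e., verifying that a single transverse crossing of two edge-segments increases the edge count by exactly two while increasing the vertex count by exactly one. This is really a local computation at each crossing: four edge-ends emanate from the new degree-4 vertex, and since the two crossing strands were each one segment before and become two after, the net effect is $+1$ vertex and $+2$ edges per crossing. Since only transverse double-crossings occur (no edge is used twice within a transverse walk, and distinct transverse walks share no edges, per the earlier discussion), there is no subtlety about triple points or self-tangencies, so this local count holds uniformly and the cancellation of $c$ is clean. The second sentence of the proposition is then immediate: the right-hand side $|V(G)|-\tfrac12|E(G)|$ depends only on $G$, so any closed surface admitting such an immersion must share this one value of $\chi$.
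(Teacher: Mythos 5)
Your proposal is correct and follows essentially the same route as the paper's proof: compute $\chi(S)$ from the skeleton grid $\wh G$, use $4|F(\wh G)|=2|E(\wh G)|$ from the quadrangular property, and observe that $|V(\wh G)|=|V(G)|+c$ and $|E(\wh G)|=|E(G)|+2c$ so the crossing count cancels. The extra care you take justifying the local $+1$ vertex, $+2$ edges count at each crossing is a reasonable elaboration of what the paper states without comment.
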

\begin{proof}
Let $\wh G$ be the skeleton grid of transverse immersion of $G$ in $S$ with $v_4$ being the number of transverse
crossings used. Thus $|V(\wh G)|=|V(G)|+v_4$, $|E(\wh G)|=|E(G)|+2v_4$, and $f$ is the number of faces of the embedding
of $\wh G$, then $4f=2(|E(G)|+2v_4)$. We now have that\begin{eqnarray*}
                            \chi(S) &=& |V(G)|+v_4-(|E(G)|+2v_4)+\textstyle\frac12(|E(G)|+2v_4) \\
                             &=& |V(G)|-\textstyle\frac12|E(G)|
                          \end{eqnarray*}

\end{proof}

If a graph $G$ without degree-4 vertices does have a quadrangular immersion in a closed surface $S$, then even though
$G$ satisfies $\chi(S)=|V(G)|-\frac12|E(G)|$, two different quadrangular transverse immersions of $G$ may have
different numbers of transverse crossings. Figure \ref{F:AlternatingWheel} shows two quadrangular immersions of the
alternating 10-wheel with zero and five transverse crossings, respectively. Clearly this example generalizes to the
alternating $(4k+2)$-wheel for any $k\geq2$.

\begin{figure}[H]
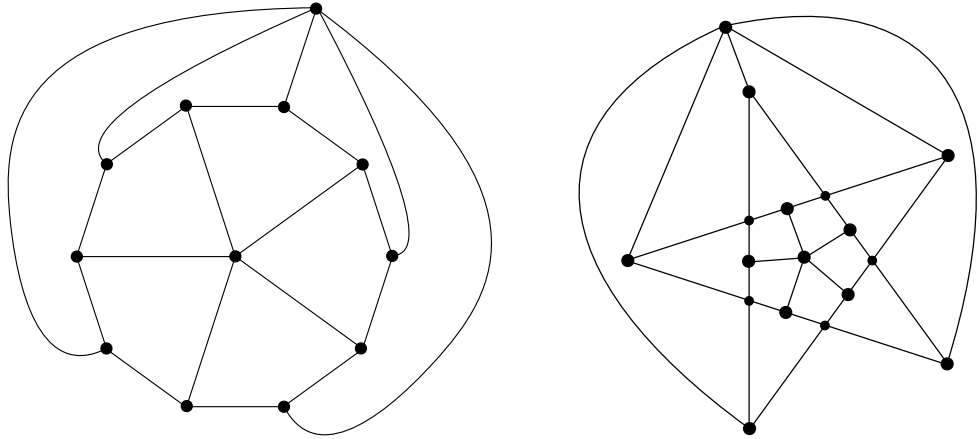

\begin{center}
\includegraphics[scale=.35,page=6]{GeneralStructureFigures.pdf}\hspace{1cm}
\includegraphics[scale=.4,page=7]{GeneralStructureFigures.pdf}
\end{center}
\caption{Two different quadrangular transverse immersions of the same graph
with different numbers of transverse crossings.} \label{F:AlternatingWheel}
\end{figure}

\noindent In fact, in general it is not even possible to place an upper bound on the number of transverse crossings
(see Figure \ref{F:QuotientOfGrid}). Interestingly, the graph in Figure \ref{F:QuotientOfGrid} is the quotient of the
alternating $(4k+2)$-wheel under its $(2k+1)$-fold rotational symmetry.

\begin{figure}[H]
\begin{center}
\includegraphics[scale=.8,page=8]{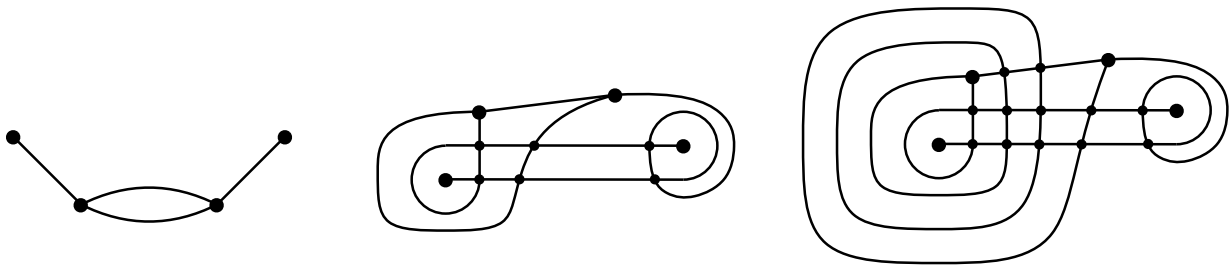}
\end{center}
\caption{Unbounded numbers of transverse crossings for the same graph.} \label{F:QuotientOfGrid}
\end{figure}

\section{Grids coming from arbitrary embeddings} \label{S:Overlay}

For any graph $H$ that is cellularly embedded in a closed surface $S$, there are two $S$-grids that are naturally
associated with the embedding of $H$ and its topological dual graph $H\dual$. These two types of $S$-grids also form
fundamental subclasses within the class of all $S$-grids. As such, $S$-grids are actually fundamental objects in
topological graph theory. In this section we give a short review of these $S$-grids.

\subsection{Radial Graphs}

The well-known \emph{radial graph}, $\mc R(H,H\dual)$ has vertex set $V(H)\cup V(H\dual)$. To describe the edges of
$\mc R(H,H\dual)$ consider a face $f$ of the embedding of $H$ in $S$ and its boundary walk
$v_1,e_1,v_2,e_2,\ldots,v_m,e_m,v_1$. The vertex $f\dual\in V(H\dual)$ has edges $g_1,\ldots,g_m$ connecting
respectively to $v_1,\ldots,v_m$. The radial graph is clearly an $S$-grid that is also bipartite with partite sets
$V(H)$ and $V(H\dual)$. The radial graph satisfies $\mc R(H,H\dual)=\mc R(H\dual,H)$ and the diagonals of the
quadrilateral faces of $\mc R(H,H\dual)$ connecting the vertices of $V(H)$ form $E(H)$ and the diagonals connecting the
vertices of $V(H\dual)$ form $E(H\dual)$. This latter observation yields Proposition \ref{P:RadialGraphsAreGrids}.

\begin{proposition}[Pisanski and Malni\v c \cite{Pisanski:MedialGraph}] \label{P:RadialGraphsAreGrids}
An $S$-grid $G$ is of the form $\mc R(H,H\dual)$ for some $H$ embedded in $S$ if and only if $G$ is bipartite.
\end{proposition}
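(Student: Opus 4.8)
The forward direction requires no work beyond what is already observed above: if $G=\mc R(H,H\dual)$ then every edge of $G$ joins a vertex of $V(H)$ to a vertex of $V(H\dual)$, so $(V(H),V(H\dual))$ is a bipartition and $G$ is bipartite. The content of the proposition is the converse, which I would prove by an explicit reconstruction of $H$ from a bipartite $S$-grid $G$.

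So suppose $G$ is a bipartite $S$-grid with bipartition $(A,B)$. First I would record the basic face structure: since $G$ is an $S$-grid every facial boundary walk has length four, and since $G$ is bipartite each such walk alternates between $A$ and $B$, so it has the form $a,b,a',b'$ with $a,a'\in A$ and $b,b'\in B$ (allowing $a=a'$ or $b=b'$ in degenerate faces). Inside each face I would draw the \emph{$A$-diagonal} joining its two $A$-corners, and let $H$ be the graph with vertex set $A$ and one such diagonal per face of $G$. Since distinct diagonals lie in distinct faces of $G$, no two of them cross, so $H$ is embedded in $S$ (possibly with loops or parallel edges, exactly as a general radial graph permits).

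The crux is to identify the faces of this embedded $H$ with the vertices of $B$. Here I would argue locally: around a fixed $b\in B$ of degree $d$, the $d$ faces of $G$ incident to $b$ occur in rotational order, consecutive faces sharing an edge $ba_i$, and their $A$-diagonals join consecutive neighbors $a_1,a_2,\ldots,a_d$ of $b$. These diagonals therefore bound a disk --- the union of the triangles $a_i,b,a_{i+1}$ cut off from the incident faces --- whose interior meets $V(G)$ in the single vertex $b$ and contains no other $A$-diagonal. Thus each $b$ lies in its own face of $H$, and since each quadrilateral of $G$ is split by its $A$-diagonal into one such $b$-triangle and one $b'$-triangle, these disks tile $S$; hence $H$ is cellularly embedded with its faces in bijection with $B$. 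I would confirm the count with Euler's formula: from $2|E(G)|=4f$ one gets $\chi(S)=|A|+|B|-f$, while a cellular $H$ with $|A|$ vertices and $f$ edges has exactly $\chi(S)-|A|+f=|B|$ faces, matching the bijection.

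It then remains to check $\mc R(H,H\dual)=G$ as embeddings. The vertex set of $\mc R(H,H\dual)$ is $V(H)\cup V(H\dual)=A\cup B$ under the face-to-$B$ bijection, and the boundary walk of the face of $H$ containing $b$ visits exactly the neighbors $a_1,\ldots,a_d$ of $b$; the radial edges from $b$ to these vertices are precisely the edges of $G$ at $b$, and every edge of $G$ has a unique $B$-endpoint, so all edges are recovered with the correct embedding. The main obstacle I anticipate is the local face analysis of the previous paragraph --- verifying rigorously that the $A$-diagonals around $b$ close up into a single bounding cycle enclosing only $b$, and handling degenerate faces (where $a=a'$ produces a loop of $H$, or a boundary walk repeats a vertex) so that the tiling and the bijection with $B$ survive these cases.
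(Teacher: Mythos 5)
Your proposal is correct and follows the same route the paper indicates: the paper justifies the converse by the observation that the diagonals of the quadrilateral faces joining the vertices of one partite set form the edges of $H$ (and the other diagonals form $E(H\dual)$), which is exactly your $A$-diagonal reconstruction. You simply carry out in full the verification (local disk around each $b\in B$, Euler count) that the paper leaves to the cited reference.
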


Self-dual embeddings are nicely encoded by the radial graph in that the embeddings of $H$ and $H\dual$ are map
isomorphic if and only if $\mc R(H,H\dual)$ has a cellular automorphism that switches the partite sets $V(H)$ and
$V(H\dual)$. Self-dual embeddings have been studied from this viewpoint by Archdeacon and Richter
\cite{ArchdeaconRichter:PlanarSelfDual}, Archdeacon and Negami \cite{ArchdeaconNegami:ProjectivePlane}, and Abrams and
Slilaty \cite{AbramsSlilaty:Cellular}.

The topological dual graph of the radial graph $\mc R(H,H\dual)$ is known as the \emph{medial graph} $\mc M(H,H\dual)$.
The medial graph has been used by Archdeacon \cite{Archdeacon:MedialGraph} to give a unified presentation of the
concepts of voltage-graph and current-graph covering constructions. In \cite{MoffattMonaghan:TwistedDuality}, Moffatt
and Ellis-Monaghan describe the impressive result that all possible embeddings of the medial graph $\mc M(H,H\dual)$ in
all possible surfaces correspond precisely to the various notions of duality that generalize topological duality,
Petrie duality, and their partial versions and associated group actions.

\subsection{Overlay Graphs}

Consider a connected graph $H$ cellularly embedded in a closed surface $S$; its topological dual graph $H\dual$ is
therefore well defined, connected, and cellularly embedded. Say that all of $H$ (both vertices and edges) is colored
``red" and all of $H\dual$ is colored ``blue". Embed $H$ and $H\dual$ simultaneously in $S$ and at each edge/dual-edge
crossing point create a new vertex of degree four (which now has alternating red and blue edges in rotation around the
vertex) and say that this new vertex is ``white". The graph obtained is called the \emph{overlay graph} $\mc
O(H,H\dual)$. Certainly the overlay graph is an $S$-grid that is also bipartite with partite sets $\mathsf{Red\cup
Blue}$ and $\mathsf{White}$. Since the edge/dual-edge pairs of $H$ and $H\dual$ are the diagonals of the faces of the
radial graph $\mc R(H,H\dual)$ we also get that $\mc O(H,H\dual)$ is the radial graph of the radial graph of $H$ and
$H\dual$, that is, $\mc O(H,H\dual)=\mc R(\mc R(H,H\dual),\mc M(H,H\dual))$; recall that $\mc M(H,H\dual)$ is the
topological dual graph of $\mc R(H,H\dual)$.

The embedding of $H$ is self dual if and only if $\mc O(H,H\dual)$ has a cellular automorphism that reverses red and
blue colors and preserves white. The overlay graph was used by Servatius and Servatius
\cite{ServatiusServatius:Self-DaulCatalogueSphere,ServatiusServatius24,ServatiusServatius:Self-dualgraphs} to classify
self-dual embeddings in the sphere along with the pairing of their groups of color-preserving cellular automorphisms of
$\mc O(H,H\dual)$ as an index-2 subgroup of the group of red-blue switching cellular automorphisms of $\mc
O(H,H\dual)$. Graver and Hartung \cite{GraverHartung} do the same but with more detailed results for the special case
of self-dual embeddings of graphs having four trivalent vertices and the remaining vertices all of degree four.

For any closed surface $S$,  $\mc O(H,H\dual)$ is an $S$-grid that is bipartite and with the additional property that
all white vertices have degree four. Conversely, however, even if $G$ is a bipartite $S$-grid in which all white
vertices have degree four, it is not necessarily true that $G$ is of the form $\mc O(H,H\dual)$ for some $H$. An
additional condition that does ensure that $G$ has the form $\mc O(H,H\dual)$ is as follows: let $R(G)$ be the graph
obtained from $G$ by placing a diagonal edge connecting the black corners of each face and then deleting the white
vertices of $G$.

\begin{proposition} \label{P:GridOverlayEquivalence}
If $G$ is an $S$-grid, then $G=\mc O(H,H\dual)$ for some $H$ if and only if $G$ is bipartite, every white vertex of $G$
has degree 4, and $R(G)$ is bipartite.
\end{proposition}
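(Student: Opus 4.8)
The plan is to prove the biconditional in Proposition~\ref{P:GridOverlayEquivalence} by handling the two directions separately, with the forward direction being essentially a recollection of facts already established and the reverse direction carrying the real content.

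For the forward direction, suppose $G = \mc O(H,H\dual)$ for some $H$ cellularly embedded in $S$. The text preceding the statement already notes that the overlay graph is bipartite with partite sets $\mathsf{Red}\cup\mathsf{Blue}$ and $\mathsf{White}$, and that every white vertex has degree four (it sits at a single edge/dual-edge crossing). It remains to check that $R(G)$ is bipartite. I would observe that the black corners of the faces of $G$ are exactly the red and blue vertices, and that the diagonal edge of $R(G)$ added across each face reconstructs either an edge of $H$ (joining two red vertices) or an edge of $H\dual$ (joining two blue vertices). Thus $R(G)$ is the disjoint-on-colors union $H \sqcup H\dual$ on the vertex set $V(H)\cup V(H\dual)$, so no edge of $R(G)$ ever joins a red vertex to a blue one. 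The two-coloring of $R(G)$ by ``red versus blue'' is therefore a proper 2-coloring, giving bipartiteness immediately.

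For the reverse direction, assume $G$ is a bipartite $S$-grid with color classes $\mathsf{Black}$ and $\mathsf{White}$, every white vertex of degree four, and $R(G)$ bipartite. The goal is to recover an embedded graph $H$ with $G = \mc O(H,H\dual)$. The construction is forced: I would take $R(G)$, which lives on the black vertices with a diagonal across each face of $G$, and use its bipartition to split the black vertices into two classes, say $\mathsf{Red}$ and $\mathsf{Blue}$. I would then let $H$ be the subgraph of $R(G)$ induced by the red vertices together with the $R(G)$-edges joining two red vertices, and define $H\dual$ analogously on the blue side. The crux is to verify that this $H$ (with the embedding inherited from $S$) is cellularly embedded, that its topological dual is precisely the blue graph just constructed, and that overlaying them reproduces $G$ with the white vertices reappearing exactly at the edge/dual-edge crossings.

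The main obstacle is the middle claim: that the red and blue graphs obtained from the bipartition of $R(G)$ really are a dual pair of cellular embeddings, not merely two abstract graphs drawn in $S$. The key is that in a face of $G$ with black corners $b_1,b_3$ and white corners $w_2,w_4$, the single diagonal of $R(G)$ joins $b_1$ to $b_3$; since $R(G)$ is bipartite, $b_1$ and $b_3$ receive opposite colors, so every face of $G$ carries exactly one red--blue diagonal crossing, which is what makes each white vertex a genuine transverse edge/dual-edge crossing of degree four. I would then argue that the faces of the red graph $H$ correspond bijectively to the blue vertices (and vice versa) by tracking how the quadrilateral faces of $G$ assemble around each blue vertex into a single face of $H$, thereby identifying the blue graph with $H\dual$ and confirming cellularity via an Euler-characteristic count in the spirit of Proposition~\ref{Arithmetic1}. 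Once duality is established, the equality $G = \mc O(H,H\dual)$ follows by retracing the construction of the overlay graph.
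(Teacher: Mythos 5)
Your high-level strategy---use the bipartition of $R(G)$ to two-colour the black vertices red and blue and then reconstruct $H$---is exactly the paper's (whose own proof is a one-liner), but your description of what $R(G)$ is for an overlay graph is wrong, and the error propagates into your construction of $H$. For $G=\mc O(H,H\dual)$, each quadrilateral face of $G$ sits in a corner of a face $f$ of $H$ at a vertex $v$, and its two black corners are $v\in V(H)$ and $f\dual\in V(H\dual)$; the white corners are the two crossing vertices on the edges of $H$ flanking that corner. The diagonal joining the black corners is therefore an edge of the radial graph, so $R(\mc O(H,H\dual))=\mc R(H,H\dual)$, not $H\sqcup H\dual$: every edge of $R(G)$ joins a red vertex to a blue one. (Note also that even under your reading, ``no edge ever joins a red vertex to a blue one'' would make red-versus-blue an \emph{improper} $2$-colouring; a proper colouring needs adjacent vertices to differ.) Bipartiteness of $R(G)$ still holds, but with $\{V(H),V(H\dual)\}$ as the bipartition.

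Consequently, in the converse direction your definition of $H$ as the red vertices ``together with the $R(G)$-edges joining two red vertices'' produces a graph with no edges at all, since red and blue are the bipartition classes of $R(G)$. This contradicts your own---correct---observation one paragraph later that the two black corners $b_1,b_3$ of any face receive opposite colours. The repair is to build $H$ from $G$ itself rather than from $R(G)$: around each white vertex $w$ the four black neighbours alternate red, blue, red, blue in rotation (consecutive neighbours are the black corners of a common face, hence differently coloured), so $w$ is the crossing of a red--$w$--red path and a blue--$w$--blue path; the former are the edges of $H$ and the latter the edges of $H\dual$. With that substitution the remainder of your outline (each face of the red graph contains exactly one blue vertex, cellularity via an Euler-characteristic count) goes through and supplies the detail the paper's proof leaves implicit.
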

\begin{proof}
The one direction is trivial. For the other direction, the fact that $R(G)$ is bipartite allows us to properly 2-color
(red and blue) the vertices of $R(G)$, which shows $G$ is of the form $\mc O(H,H\dual)$, as required.
\end{proof}

\bibliographystyle{amsplain}
\bibliography{SphericalGridsGeneral_bibfile}
\end{document}